\begin{document}
\newtheorem{proposition}{Proposition}
\newtheorem{corollary}{Corollary}
\newcommand{\ds}{\displaystyle}
\newtheorem{definition}{Definition}[section]
\newtheorem{theorem}{Theorem}
\newtheorem{lemma}{Lemma}
\newcommand\scaleSizeA{0.22}
\newcommand\scaleSizeB{0.14}
\parskip=12pt

\title[Random elements of $T$ ]{Elements with north-south dynamics and free subgroups have positive density in Thompson's groups $T$ and $V$}

\author{Sean Cleary}
\address{Department of Mathematics \\
The City College of New York and the CUNY Graduate Center\\
City University of New York \\
New York, NY 10031, USA}
\email{scleary@ccny.cuny.edu}
\urladdr{\tt http://cleary.ccnysites.cuny.edu}

\author{Ariadna Fossas Tenas}
\address{Institut des Sciences de l'Environnement \\
Universit\'e de Gen\`eve \\
Boulevard Carl-Vogt, 66\\ CH-1205 Geneva, Switzerland}
\email{ariadna.fossastenas@gmail.com}

\thanks{
	Partial funding provided by NSF \#1417820. This work was partially supported by a grant from the Simons Foundation (\#234548 to Sean Cleary).  Support for this project was provided by a PSC-CUNY Award, jointly funded by The Professional Staff Congress and The City University of New York.
}
\keywords{random elements of groups, Thompson's group $T$, Thompson's group $V$}

\begin{abstract}
We show that the density of elements in
Thompson's groups $T$ and $V$ which have north-south dynamics acting on the circle is positive with respect to
a stratification in terms of size.
We show that the fraction of element pairs which generate a free subgroup of rank two is positive in both groups with respect to a stratification based on diagram size, which shows that the associated subgroup spectra of $T$ and $V$ contain free groups.

\end{abstract}


\maketitle

\section{Introduction}

Here, we study likelihoods of randomly choosing elements of Thompson's groups $T$ and $V$  with particular dynamical type.
These investigations lead to some properties of subgroups generated by a collection of randomly selected elements as well.
We use the notion of {\em asymptotic density} which measures the fraction of elements satisfying a particular condition, 
similar to the approach of Cleary, Elder, Rechnitzer and Taback \cite{randomf}.
To define asymptotic density, we rely upon choosing a particular stratification for elements-- that is, a way of measuring size.   Typically, word length is used as a method for measuring size.  In that case, we regard these methods as choosing elements at random from metric balls or spheres of increasing word length.
However, since the growth rates of Thompson's groups $F$, $T$, and $V$ are not known, that is not presently feasible. Here, as in \cite{randomf}, we use the notion of size of a representative as the number of nodes in a tree pair representing an element.  With respect to stratifications of this type, the fractions of elements which have actions on the circle having a single attracting point and a single repelling point have positive density in both Thompson's groups $T$ and $V$.  Such elements are known as having {\em north-south dynamics} and play important roles in the dynamics of actions on the circle, see for example Thurston \cite{thurstonsurf}.  
The notion of choosing a $k$-generator subgroup at random again relies upon not only the notion of size, but how the size of tuples is measured.
Here, we construct families of pairs of elements to show that the density of free subgroups of rank two is positive in both groups with respect to the stratifications below.  These arguments generalize to show that this holds in higher subgroup rank cases as well.  Isomorphism classes of 
subgroups which have positive density are termed {\em visible} in the $k$-generator {\em subgroup spectrum} of a group, so what we show below is that free groups of rank $k$ are visible in the $k$-generator subgroup spectra of $T$ and $V$ with respect to these stratifications.

\section{Background and definitions}

We begin with a notion of size via defining a stratification of elements.  A {\em stratification} of a set of representatives $X$ of a finitely-generated group $G$,  is a collection of subsets $S_i$ indexed by size $i$ satisfying that each $S_i$ is a finite set and that the entire group $G$ is the union of elements of increasing size.  Different notions of size may lead to different asymptotic behavior.  Generally, we prefer for size to be word length with respect to a natural finite generating set.  However, computing asymptotic density is not always feasible with respect to that notion of size.  For randomly-selected elements of Thompson's group $F$ in \cite{randomf}, the notion of size was the size of reduced tree pair diagrams representing elements.  The exact growth of elements of $F$ is not known with respect to any generating sets, and in fact not even the exponent of the rate of growth is known.  So determining fractions of metric balls which satisfy a particular condition is presently not feasible since we do not have a good understanding of even the denominators of such fractions.  For Thompson's group $F$, there is an effective means of measuring word lengths of elements exactly developed by Fordham \cite{blakegd}.  In Thompson's groups $T$ and $V$, we do not have
effective means of measuring word length and again we also do not know the growth rate precisely or again even the exponential rate of growth.   So for the stratifications considered here, we use a similar approach to that for $F$ where we measure size with respect to the number of nodes in tree pairs representing elements.

Elements of Thompson's group $T$ are piecewise-linear maps from $S_1$ to itself where the slopes, when defined, are powers of 2 and where the breakpoints lie in the dyadic rationals $\mathbb Z[\frac12]$. See Cannon, Floyd and Parry \cite{cfp}, Cleary \cite{ohggt}, and Burillo, Cleary, and Taback \cite{ctcomb} for further background.
We represent elements of $T$ as cyclically marked  tree pair diagrams $(S,T,i)$.  A {\em cyclically marked tree pair diagram} is a pair of rooted binary trees of the same size (with $n$ internal nodes and thus $n+1$ leaves, with leaves numbered from $0$ to $n$) together with a marking of a single leaf of tree $T$ indicating where leaf $0$ of $S$ is sent.  The cyclic order on leaves determines the remaining leaf pairings from $S$ to $T$.  The number of trees of size $n$ is the Catalan number $C_n$ so the number of possible tree pair diagrams for elements of $T$ of size $n$ is $C_n \times C_n \times (n+1)$.  A cyclically marked tree pair diagram gives an element of $T$.   Note that with various sizes, there are many cyclically marked tree pair diagrams representing the same element of $T$.  Though there is one such of minimal size, we count them all and thus can have the same element appear in multiple levels of the stratification.

Similarly, elements of Thompson's group $V$ are piecewise-linear bijections from $S_1$ to itself which are right-continuous, where the slopes, when defined, are powers of 2 and where the breakpoints lie in the dyadic rationals.
We represent elements of $V$ as fully marked  tree pair diagrams $(S,T,\pi)$.  A {\em fully marked tree pair diagram} is a pair of rooted binary trees of the same size (with $n$ internal nodes) together with a marking of all leaves of tree $T$ indicating where each leaf $i \in [0,n]$ of $S$ is sent.   The resulting number of possible tree pair diagrams for elements of $V$ of size $n$ is $C_n \times C_n \times (n+1)!$.  Again, there are many fully marked tree pair diagrams representing the same element and different marked tree pairs of different sizes can correspond to the same element of $V$.

The notion of density is with respect to a particular stratification.  For the stratification that we use here, we measure size as the number of nodes in a tree pair diagram representing an element.  For example, a tree pair $(S,T)$ where each tree has 12 internal nodes with any labeling on $T$ is considered to be of size 12.  

An element acting on the unit circle has {\em north-south dynamics} it if has exactly two fixed points, one of which is attracting for positive iterates of the action and one of which is attracting for iterates of the inverse of the action.

\section{Density of  elements with North-South dynamics in $T$}

We consider cyclically marked tree pair diagrams of a particular form and show that the elements from these pairs have  north-south dynamics.  By counting the number of this type and showing that the fraction of these of all cyclically marked tree pairs has a positive limit, we show that the fraction of elements with north-south dynamics is positive.  

\begin{theorem}
The fraction of cyclically marked tree pair diagrams which correspond to elements of Thompson's group $T$ with north-south dynamics has a positive limit as the size increases.
\end{theorem}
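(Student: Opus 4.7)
The plan is to exhibit an explicit family $\mathcal{F}_n$ of cyclically marked tree pair diagrams of size $n$ so that (i) every diagram in $\mathcal{F}_n$ produces an element of $T$ with north-south dynamics, and (ii) the ratio $|\mathcal{F}_n| / (C_n^2(n+1))$ is bounded below by a positive constant as $n \to \infty$. Since the only factor in the denominator that grows polynomially is the $(n+1)$ from the choice of marking, the family must either permit $\Theta(n)$ markings per tree pair or prescribe the marking but allow a positive fraction of tree shapes; I would aim for the former since north-south dynamics is a translation-covariant condition on the circle.

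The construction of $\mathcal{F}_n$ would impose only a bounded amount of structure on $S$ and $T$. Concretely, I would fix small "caret towers" at two distinguished locations in each tree (say, one pattern at the left end and a complementary pattern nested into some fixed interior location), leaving the rest of both trees arbitrary. The carets would be arranged so that, for the piece of $f$ determined by the first distinguished leaf, the leaf depth in $S$ strictly exceeds that in $T$, producing a slope $2^{d_S - d_T} > 1$ whose image dyadic interval strictly contains the domain dyadic interval; the intermediate value theorem then forces a repelling fixed point in the interior of that piece. The second distinguished location would be built mirror-symmetrically to force an attracting fixed point with slope $<1$. The marking $i$ would be constrained only to place leaf $0$ of $S$ somewhere in a specified subtree of $T$; this is a congruence-style condition that admits $\Theta(n)$ valid values of $i$ for each allowed pair $(S,T)$.

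For the count, once the bounded local patterns are fixed, the remaining tree shape on each side consists of a bounded number of free subtrees whose sizes must sum to $n - k$ for some fixed $k$; the number of such completions is $\Theta(C_{n-k}^2)$ by a standard convolution argument with Catalan numbers. Combining, $|\mathcal{F}_n| = \Theta(n \cdot C_{n-k}^2)$, and using $C_{n-k}/C_n \to 4^{-k}$ we get
\[
\frac{|\mathcal{F}_n|}{C_n^2 (n+1)} \;=\; \Theta(1) \cdot \frac{C_{n-k}^2}{C_n^2} \;\longrightarrow\; \Theta(4^{-2k}) > 0.
\]

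The main obstacle is the dynamical verification in step (i): showing that no additional fixed points appear on the other pieces of $f$. Each of the remaining $n - 1$ pieces is an affine map between dyadic intervals, and any such piece can in principle contribute a fixed point if its domain and image overlap and its slope is not $1$, or if slope $=1$ and the intervals coincide. To rule this out, I would design the marking constraint so that the non-distinguished pieces are forced to have their image intervals cyclically disjoint from their domain intervals on $S^1$ (consistently shifted in one direction around the circle), giving a monotone "flow" between the repelling and attracting points and precluding any extra fixed points. Checking this disjointness condition holds uniformly over the arbitrary part of the trees is where the careful combinatorial bookkeeping occurs, and it is the step whose successful execution pins down the exact size of the bounded local pattern $k$ required.
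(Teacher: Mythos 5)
Your overall strategy---exhibit an explicit family of diagrams with a bounded amount of forced local structure, allow $\Theta(n)$ markings, and use $C_{n-k}/C_n \to 4^{-k}$ to get a positive limiting fraction---is exactly the paper's strategy, and your counting step is sound. But the proof has a genuine gap precisely where you flag it: you never actually produce a construction for which the "no additional fixed points" verification can be carried out. Your proposed mechanism is to use the marking constraint alone to force every non-distinguished piece to have image cyclically disjoint from its domain, "consistently shifted in one direction." This is not achievable in the generality you describe: the cyclic marking is a single integer and does not control where the individual leaf intervals of the arbitrary parts of $S$ and $T$ sit relative to one another, so for generic tree shapes the domain and image intervals of the free pieces can overlap in essentially arbitrary ways. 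Without pinning down the structural constraint, the claim that a positive fraction of shapes survives the disjointness check is unsupported, and the whole argument reduces to a plan rather than a proof.

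The missing idea in the paper is structural rather than marking-based, and it makes the verification trivial: take $S$ with the left child of its root a single leaf (so leaf $0$ of $S$ is all of $[0,\tfrac12)$) and $T$ with the right child of its root a single leaf, carrying label $i \notin \{0,1,n\}$. Then leaf $0$ of $S$, the whole interval $[0,\tfrac12)$, maps into the single sub-interval of $[0,\tfrac12)$ labeled $0$ in $T$, giving the attracting fixed point; and every leaf $1,\dots,n$ of $S$ (which partition $[\tfrac12,1)$) maps into $[0,\tfrac12)$ \emph{except} leaf $i$, whose interval is expanded onto all of $[\tfrac12,1)$, giving the repelling fixed point and automatically excluding fixed points on all other pieces because their images land in the opposite half-circle. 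The exclusions $i \neq 0,1,n$ keep both fixed points in the interiors of their intervals. This costs one caret of forced structure on each tree and $3$ forbidden markings, yielding $(n-2)\,C_{n-1}^2$ diagrams out of $(n+1)\,C_n^2$ and the limit $\tfrac1{16}$. You should either adopt a constraint of this kind or supply the combinatorial argument you deferred; as written, the dynamical half of the proof is absent.
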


\begin{proof}
We consider the set of cyclically labelled tree pairs $(S,T,k)$ where the left subtree of the root of $S$ is a single leaf node, the right subtree of the root of  $T$ is a single leaf node, and the labeling given by $k$ is such that the label of the last leaf in $T$ is $i$ which is anything except 0, 1 or $n$.  Such elements are of  the form shown in Figure \ref{Tnsfig}.

\begin{figure}
\begin{tabular}{c}
   \begin{tikzpicture}
   [level/.style={sibling distance = 2.5cm/#1,
  level distance = 1.5cm}] 
  
  \node[shape=circle,draw] {$ $}
    child { node {0} }
     child { node[shape=rectangle, rounded corners,draw] {$1, \ldots, n$} } ;
\end{tikzpicture}
   \begin{tikzpicture}
   [level/.style={sibling distance = 3.5cm/#1,
  level distance = 1.5cm}]
   \node[shape=circle,draw] {$ $}
    child { node[shape=rectangle, rounded corners,draw] {$i+1,..,0,1, .., i-1$} }
     child { node{$i\not\in \{0,1,n\}$} };
\end{tikzpicture}
\end{tabular}
    \caption{Elements $u$ of the type above all have north-south dynamics subgroups in $T$.  The single indices indicate leaves, and the rectangular boxes can have arbitrary subtree shape on the specified sets of leaves.}
    \label{Tnsfig}
\end{figure}

\begin{figure}

\begin{tikzpicture}
\tkzDefPoint(0,0){O}
\tkzDrawCircle[R](O,1 cm)
\node[label=$0$] at (1.2,-0.5){};
\node[label=$\frac14$] at (0,0.8){};
\node[label=$\frac12$] at (-1.4,-0.5){};
\node[label=$\frac34$] at (0,-1.9){};
\draw[red,very thick] (1.05,0) arc (0:180:1.05cm);
\tkzDefPoint(4,0){O}
\node[label=$f$] at (2,0.5){};
\draw[->] (1.6,0.5) to (2.4,0.5){};
 \tkzDrawCircle[R](O,1 cm);
\node[label=$0$] at (5.2,-0.5){};
\node[label=$\frac14$] at (4,0.8){};
\node[label=$\frac12$] at (2.6,-0.5){};
\node[label=$\frac34$] at (4,-1.9){};
\draw[red,very thick] (4,1.04) arc (90:135:1.05cm);
\end{tikzpicture}

\begin{tikzpicture}
\tkzDefPoint(0,0){O}
\tkzDrawCircle[R](O,1 cm)
\node[label=$0$] at (1.2,-0.5){};
\node[label=$\frac14$] at (0,0.8){};
\node[label=$\frac12$] at (-1.4,-0.5){};
\node[label=$\frac34$] at (0,-1.9){};
\draw[red,very thick] (-1.05,0) arc (180:360:1.05cm);
\tkzDefPoint(4,0){O}
\node[label=$f^{-1}$] at (2,0.6){};
\draw[->] (1.6,0.6) to (2.4,0.6){};
 \tkzDrawCircle[R](O,1 cm);
\node[label=$0$] at (5.2,-0.5){};
\node[label=$\frac14$] at (4,0.8){};
\node[label=$\frac12$] at (2.6,-0.5){};
\node[label=$\frac34$] at (4,-1.9){};
\draw[red,very thick] (4,-1.04) arc (270:305:1.05cm);
\end{tikzpicture}

\caption{A schematic of the dynamics of an element $f$ of the type specified in Figure \ref{Tnsfig}.  The element has an attracting fixed point in the interval $(0,1/2)$   and a repelling fixed point in the interval $(1/2,1)$.  }
\label{Tnsdyn}
\end{figure}
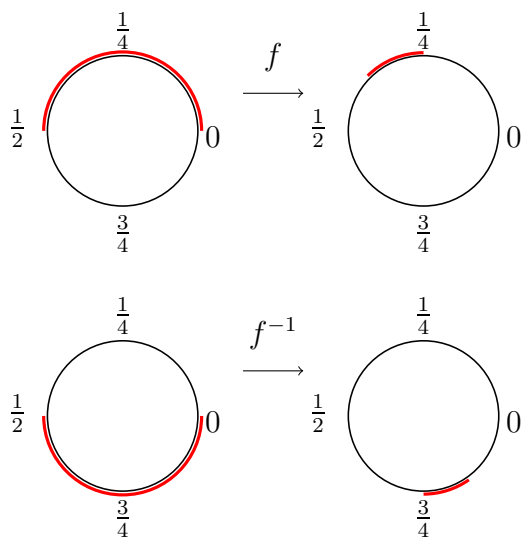

Any element of $T$ corresponding to such a tree pair will have north-south dynamics.  We let $t \in T$ denote the group element corresponding to  $(S,T,k)$.  The dynamics of $t$ are north-south.  For positive iterates of $t$, there is a unique fixed point in the first half-interval marked by leaf $0$.  The region $[0,\frac12)$ from leaf $0$ in $S$ is sent to leaf $0$ in $T$, which lies below the left child of the root and corresponds to a smaller sub-interval in $(0,\frac12)$, resulting in an attracting fixed point there.   Note that the requirement that the rightmost leaf of the target tree must not be $n$ ensures that the fixed point lies in the interior of the interval from $0$ to $\frac12$.   In the second-half interval $[\frac12,1)$, all intervals except for that corresponding to leaf $i$ are sent to the first half-interval $[0,\frac12)$ so there are no possible fixed points except in the interval for leaf $i$.  In that interval, we see a fixed point as the image of that interval is stretched to cover the full half-interval $(\frac12,1]$ so there is a unique fixed point there.   For the inverse, the dynamics are reversed with there being a repelling fixed point in the first half-interval corresponding to the interval labeled $0$ in $T$, and an attracting fixed point in the second half-interval corresponding to the leaf labeled $i$ in $T$.  These dynamics are illustrated in Figure \ref{Tnsdyn}.

The total number of cyclically labeled tree pairs is the number of tree pairs of size $n$ which is $C_n^2$ times the number of labelings $n+1$.  Since there are $C_{n-1}$ possible subtrees of size $n-1$ of the appropriate child of the root for each of $S$ and $T$, and there are $n-2$ labelings which do not have leaf $0$, $1$ or $n$ of $S$ paired with the last leaf of $T$, then the number of cyclically labeled tree pairs of size $n$ of this type with guaranteed north-south dynamics is $C_{n-2}^2 (n-2)$, so we have 

$$ \lim_{n \rightarrow \infty} \frac{(n-2)C_{n-1}^2}{(n+1) C_n^2} = \frac1{16} > 0$$

giving positive density.
\end{proof}

We note that these elements have north-south dynamics of very specific type and it is clear that the actual fraction is larger than $\frac1{16}$ since there are other families of positive density obtained by similiar constructions.  For example, we could double the lower bound by simply considering also inverse elements to those in the family above.  It is not clear, though, how much larger the lower bound could be pushed upwards by methods similar to these.  We further note that not all dynamical types have positive density- the identity element which fixes all points is sparse in the balls of increasing size as the only way that the identity can occur is when both trees are identical and leaf 0 is leftmost on the target tree, giving only $C_n$ possible instances of the $C_n^2(n+1)$ possibilities.

\section{Density of  elements with North-South dynamics in $V$}

We can use a similar approach to show that a positive fraction of elements of $V$ have north-south dynamics.  We note that the terminology ``north south dynamics'' generally applies to continuous maps, whereas elements of $V$ are merely piecewise continuous.  Nevertheless, the fraction of elements with a single attracting fixed point and a single repelling fixed point makes sense in this setting.

\begin{theorem}
The fraction of fully marked tree pair diagrams which correspond to elements of Thompson's group $V$ with north-south dynamics has a positive limit as the size increases.
\end{theorem}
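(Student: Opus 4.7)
The plan is to run a construction parallel to the $T$ case: identify a family of fully marked tree pair diagrams $(S,T,\pi)$ each producing an element of $V$ with exactly one attracting and one repelling fixed point, then compute the limiting fraction directly.

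I would restrict to diagrams in which the left subtree of the root of $S$ is a single leaf (so leaf $0$ of $S$ corresponds to $[0,\frac12)$) and the right subtree of the root of $T$ is a single leaf (so position $n$ of $T$ corresponds to $[\frac12,1)$), mirroring the tree shapes used for $T$. To force an attracting fixed point in the interior of $(0,\frac12)$, I would require $\pi(0)\in\{1,\ldots,n-2\}$, so that leaf $0$ of $S$ is sent to an interval strictly inside the left subtree of $T$, avoiding the boundary points $0$ and $\frac12$. To force a repelling fixed point in the interior of $(\frac12,1)$, I would require $\pi^{-1}(n)=j\in\{2,\ldots,n-1\}$, so the leaf of $S$ sent to position $n$ of $T$ lies strictly inside the right subtree of $S$. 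The remaining $n-1$ source leaves are then matched bijectively with the remaining $n-1$ target positions in an arbitrary way.

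The fixed-point analysis is the delicate step and is where the argument differs from the $T$ case. On leaf $0$ of $S$ the map is an affine contraction with image strictly inside $(0,\frac12)$, giving a unique attracting fixed point in the interior; on leaf $j$ of $S$ the map is an affine expansion onto $[\frac12,1)$, giving a unique repelling fixed point in the interior. The key new observation, needed because $\pi$ is a free permutation rather than a cyclic shift, is that every remaining leaf of $S$ has its source interval in $[\frac12,1)$ but its image interval in $[0,\frac12)$, since position $n$ of $T$ is the only target interval meeting $[\frac12,1)$ and it has already been consumed by leaf $j$. Source and image are therefore disjoint on those leaves, so no further fixed points can arise there, and the element has exactly one attracting and one repelling fixed point as required by the definition used for $V$.

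For the count, there are $C_{n-1}$ choices for each of the two variable subtrees, $(n-2)$ choices for $\pi(0)$, $(n-2)$ choices for $j$, and $(n-1)!$ ways to complete $\pi$ on the remaining leaves, giving $C_{n-1}^2(n-2)^2(n-1)!$ qualifying diagrams out of the total $C_n^2(n+1)!$. Using $C_{n-1}/C_n\to\frac14$ and $(n-2)^2/(n(n+1))\to 1$, the limiting fraction works out to $\frac{1}{16}>0$. The main obstacle is exactly the verification that the added permutation freedom in $V$ does not silently introduce extra fixed points; the restrictions on $\pi(0)$ and on $\pi^{-1}(n)$, combined with the fact that all other target positions lie in the left subtree of $T$, are precisely what neutralizes this difficulty.
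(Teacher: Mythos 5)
Your proposal follows essentially the same route as the paper: the same tree shapes (left child of the root of $S$ a leaf, right child of the root of $T$ a leaf), the same fixed-point analysis interval by interval, and the same limiting computation yielding $\frac{1}{16}$. The one substantive difference is that you impose an extra condition the paper does not: you require $\pi(0)\in\{1,\dots,n-2\}$, whereas the paper only constrains which leaf of $S$ is sent to position $n$ of $T$ (it must not be leaf $0$, $1$, or $n$) and lets the remaining $n$ labels be arbitrary, counting $C_{n-1}^2(n-2)\,n!$ diagrams. Your extra condition is in fact needed: if leaf $0$ of $S$ is sent to position $n-1$ of $T$, the contraction of $[0,\frac12)$ onto an interval of the form $[d,\frac12)$ has its would-be fixed point at $\frac12$, which is not attained (the leaf of $S$ containing $\frac12$ is sent into $[0,\frac12)$), so such elements have only the one repelling fixed point and are not north-south; in the $T$ case this degeneracy is automatically excluded by the cyclic condition $i\neq 1$, but the free permutation in $V$ decouples it. So your version is the correct repair of the paper's family, and your count $C_{n-1}^2(n-2)^2(n-1)!$ still gives the limit $\frac{1}{16}>0$.
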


\begin{proof}
We consider the set of fully marked tree pairs $(S,T,\pi)$ where the left subtree of the root of $S$ is a single leaf node, the right subtree of the root of  $T$ is a single leaf node, and the permutation $\pi$  of leaves from $S$ to $T$ does not have leaf $0$, $1$ or $n$ of $S$ paired with leaf $n$ of $T$.

Such elements are of  the form shown in Figure \ref{Tnsfig}, where we now allow arbitrary permutations within the boxed region of the target tree marked $i+1 \ldots i-1$ of that set.

The analysis is similar to that above.
Any element of $V$ corresponding to such a tree pair will have north-south dynamics.  We let $v \in V$ denote the group element corresponding to  $(S,T,\pi)$.  We denote by $j$ the label of the rightmost leaf in $T$. The dynamics of $v$ are north-south.  Again, 
for positive iterates of $t$, there is a unique fixed point in the first half-interval marked $0$.   In the second-half interval $[\frac12,1)$, all intervals except for that corresponding to leaf $i$ are sent to the first half-interval $[0,\frac12)$ so there are no fixed points except in the interval for leaf $i$, where again we see a repelling fixed point in  $[\frac12,1)$.

The total number of fully labeled tree pairs is the number of tree pairs of size $n$ is $C_n^2$ times the number of permutations $(n+1)!$.   The number of fully labeled tree pairs of size $n$ of this type with guaranteed north-south dynamics is $C_{n-1}^2 n! (n-2) $, so we have 

$$ \lim_{n \rightarrow \infty} \frac{n! (n-2) C_{n-1}^2}{(n+1)! C_n^2} = \frac1{16} > 0$$

giving positive density.
\end{proof}
Again, this is a lower bound arising from a specific construction and it is not clear how much such larger lower bounds could be made using approaches similar to these.

\section{Density of  free subgroups in Thompson's group $T$}

For analyzing the density of subgroups, we consider stratifications not of single elements but of $k$-tuples of elements and then consider the isomorphism classes of the subgroups generated by those $k$ elements in the tuples.
The set of all such $k$-tuples of representatives of our group is denoted $X_k$.
If the limiting fraction of a particular subgroup isomorphism class is positive, then we say that the isomorphism class is {\em visible} in the rank $k$ spectrum of subgroups of the group.

To quantify the likelihood of randomly selecting a particular subset
of $X_k$ with a specified property,  we take a limit of the discrete counting measure on spheres of
increasing radii.  Let $|U|$ denotes the size of the set $U$. The {\em asymptotic density} or simply  {\em density}  of a subset $U$ in
$X_k$ is defined to be the limit
$$ \lim_{n\rightarrow \infty} \frac{|U\cap \mathrm{Sph}_k(n)|}{|\mathrm{Sph}_k(n)|}$$
if this limit exists.

Even for a fixed stratification on sizes of elements in a group, there are a number of possible stratifications for $k$-tuples $X_k$.  
Here, we consider the spheres Sph$_k(n)$  to be the set of $k$-tuples in which every element has size $n$.  Other possible spheres are the so-called ``max" and  ``sum" stratifications where the spheres are composed of $k$-tuples where each element has size no more than $n$ (and at least one has size exactly $n$) or where the total of the sizes is $n$.   Those stratifications were considered in Cleary, Elder, Rechnizter and Taback \cite{randomf} for Thompson's group $F$.  In that case, since the size of elements was given by reduced tree pair diagrams, it was necessary to allow variable size elements to cover all potential subgroups.  But with the notion of size considered here (all diagrams, reduced or not) we note that there are representatives of all elements of size $n$ or smaller present in the sphere of tuples of size $n$, as we no longer require reduction.  In fact, there are representatives of the identity element of all sizes, although those are quite sparse in the set of all elements of increasing size.  So for example, a three tuple of size 100 may have three elements, each of which may also have representatives of sizes 90, 30, and 5 (and also sizes between those and 100.)  All possible $k$-generator subgroups of Thompson's group $T$ are present in large radius spheres of this type.
Note that in groups with exponential growth (such as Thompson's groups) generally the most common size of elements in a metric ball with respect to word length
 are those of maximal size.  

To understand the asymptotics of the growth of these spheres of tuples, we note the following:

\begin{lemma}

The number of elements $t_k$ of size $k$ in Thompson's group $T$  is asymptotic to $\frac{\displaystyle 16^k}{\displaystyle k^2}$.
\end{lemma}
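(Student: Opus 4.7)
The plan is a direct computation from the explicit count of cyclically marked tree pair diagrams given in Section 2. Recall that a size-$k$ representative is a triple $(S,T,j)$ where $S,T$ are rooted binary trees with $k$ internal nodes and $j\in\{0,1,\ldots,k\}$ selects the image of leaf $0$; the total count is therefore $C_k^2(k+1)$, where $C_k=\frac{1}{k+1}\binom{2k}{k}$ is the $k$-th Catalan number. As the paper already emphasizes, ``number of elements of size $k$'' here means counted with multiplicity: every tree pair diagram is counted once, even if several represent the same group element.

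First I would invoke the classical Catalan asymptotic
$$C_k \;\sim\; \frac{4^k}{\sqrt{\pi}\,k^{3/2}},$$
which is a one-line consequence of Stirling's formula applied to $\binom{2k}{k}\sim 4^k/\sqrt{\pi k}$ together with the $1/(k+1)$ factor. Squaring yields $C_k^2 \sim 16^k/(\pi k^3)$, and multiplying by $k+1$ gives
$$t_k \;=\; C_k^2(k+1)\;\sim\;\frac{16^k(k+1)}{\pi k^3}\;\sim\;\frac{16^k}{\pi k^2},$$
which establishes the exponential base $16$ and the polynomial correction $k^{-2}$ asserted in the lemma.

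There is no real obstacle here; the only point worth flagging is how to read ``asymptotic to $16^k/k^2$''. The precise asymptotic carries an additional constant factor $1/\pi$, so the statement should be interpreted as identifying the exponential growth rate and the order of the polynomial prefactor rather than as strict ratio-to-$1$ equivalence. (If a cleaner constant were needed in later sections, one could simply write $t_k \sim \frac{16^k}{\pi k^2}$ without affecting any density calculations, since denominator-type terms like $(t_k)^k$ appear both in numerator and denominator of the fractions of interest and the constant cancels.) All the remaining ingredients are textbook, so the proof is essentially a one-line application of Stirling to the explicit formula $C_k^2(k+1)$ recorded in Section 2.
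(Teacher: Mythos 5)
Your proposal is correct and follows essentially the same route as the paper: Stirling applied to the Catalan numbers, giving $t_k=(k+1)C_k^2\sim 16^k/(\pi k^2)$. Your remark about the missing constant $\frac{1}{\pi}$ is a fair and slightly more careful reading than the paper's own computation, which silently drops that factor; as you note, it is harmless for the density arguments since such constants cancel in the ratios.
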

This follows immediately from the Stirling approximation applied to the Catalan numbers as we have $t_k = (k+1) C_k^2  \sim \frac{\displaystyle(k+1) 16^k}{(\displaystyle k^\frac32)^2} \sim \frac{\displaystyle 16^k}{\displaystyle k^2}$.

Here we estimate the size of these spheres for 2-tuples:

\begin{lemma}
The size of the spheres  Sph$_2(n)$ in the 
stratification for subgroups of rank 2 in Thompson's group $T$ grows as $\frac{\displaystyle16^{2n}}{\displaystyle n^4}$.
\end{lemma}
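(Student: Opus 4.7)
The plan is to reduce this to the previous lemma by the observation that the sphere $\mathrm{Sph}_2(n)$ factors as a Cartesian product. Since the stratification defines $\mathrm{Sph}_2(n)$ as the set of all ordered pairs $(u,v)$ of representatives in which \emph{both} $u$ and $v$ have size exactly $n$, and there are no compatibility constraints between the two coordinates, we have
\[
|\mathrm{Sph}_2(n)| = t_n \cdot t_n = t_n^2,
\]
where $t_n = (n+1) C_n^2$ is the count from the previous lemma.

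From there the estimate is immediate: by the previous lemma $t_n \sim 16^n / n^2$, so squaring yields
\[
|\mathrm{Sph}_2(n)| \sim \left(\frac{16^n}{n^2}\right)^2 = \frac{16^{2n}}{n^4},
\]
which is the desired asymptotic.

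The only thing to be careful about is to remember that, under the conventions fixed earlier, $t_n$ counts cyclically marked tree pair diagrams of size exactly $n$ (not group elements up to equivalence), so the product really does give the size of the sphere without over- or under-counting. There is no genuine obstacle here; this lemma is essentially a bookkeeping step whose role is to give a denominator against which the subsequent constructions of free two-generator families can be compared to establish positive density.
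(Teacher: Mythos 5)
Your proposal is correct and follows the same route as the paper, which simply notes that the result follows immediately from the preceding lemma: the sphere of $2$-tuples is the Cartesian product of two copies of the set of size-$n$ representatives, so its size is $t_n^2 \sim (16^n/n^2)^2 = 16^{2n}/n^4$. You have merely spelled out the bookkeeping that the paper leaves implicit.
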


\begin{proof}
This follows immediately from the lemma above.
\end{proof}

Similarly, we have that for larger rank subgroups represented as $k$ tuples, the size of spheres grows as  $\frac{\displaystyle 16^{kn}}{\displaystyle n^{2k}}$.

A natural way for subgroups isomorphic to free groups to appear in these spectra is when there are tuples of north-south elements with sufficiently strong dynamics to be easily seen to generate a free group via ping-pong arguments-- see Mangahas \cite{ohggtpp} for background on ping-pong elements in free groups.

We can push the north-south dynamics down to particular subintervals at the expense of having smaller trees. This gives rise to ping-pong elements generating free groups, which shows that free subgroups have positive density.

\begin{theorem}
The free group of rank 2 has positive density in the set of two-generator subgroups of Thompson's group $T$ with respect to this stratification.
\label{freeT}
\end{theorem}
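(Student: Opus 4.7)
The plan is to construct a family of pairs $(u,v) \in T \times T$, both arising from refinements of the Theorem 1 construction, which by construction satisfy a ping-pong condition, and to count them to show positive density in $\mathrm{Sph}_2(n)$.

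For the construction, I would take $u$ of the Theorem 1 form with additional carets prescribed at the top of $T_1$'s left subtree so that leaf $n-i$ of $T_1$ (where the attracting fixed point of $u$ lives) lies within a designated subarc $A_u^+ \subset [0,1/2)$, and I would prescribe additional carets in $S_1$'s right subtree so that leaf $i$ of $S_1$ (containing the repelling fixed point) lies within a designated subarc $A_u^- \subset [1/2,1)$. For $v$, I would use a second element of the same type, but further constrained so that its attracting and repelling arcs $A_v^+, A_v^-$ lie in other designated subarcs, disjoint from $A_u^\pm$ and arranged in alternating cyclic order around $S^1$. I would then verify the ping-pong lemma: $u(A_u^+) \subset A_u^+$ because $A_u^+$ lies inside leaf $0$ of $S_1$ and $u$ maps that leaf affinely into $A_u^+$, and $u(A_v^+ \cup A_v^-) \subset A_u^+$ because $A_v^\pm$ are arranged to lie in leaves of $S_1$ whose images under $u$ land inside $A_u^+$. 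Analogous conditions hold for $u^{-1}$, $v$, and $v^{-1}$, which together with pairwise disjointness of the four arcs give $\langle u, v \rangle \cong F_2$.

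For the count, the prescribed top-level carets fix only a bounded number $c$ of internal nodes per tree, leaving $C_{n-c}$ choices of subtree shape for each of $S_1, T_1, S_2, T_2$ and leaving a positive fraction of the cyclic labelings valid. The family thus has size at least a positive constant times $C_{n-c}^4 (n+1)^2$. Dividing by $|\mathrm{Sph}_2(n)| = C_n^4 (n+1)^2$ and using $C_{n-c}/C_n \to 16^{-c}$ from the Catalan asymptotics yields a positive limiting ratio.

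The main obstacle will be the ping-pong verification. Because each element $u$ from the Theorem 1 construction maps the complement of $A_u^-$ onto the full half-interval $[0,1/2)$, the strong one-step condition $u(S^1 \setminus A_u^-) \subset A_u^+$ with a small $A_u^+$ cannot hold; instead I would use the weaker form of ping-pong that only requires $u(A_v^\pm) \subset A_u^+$ (together with the self-maps $u(A_u^+) \subset A_u^+$), which is enough to force any reduced word in $u,v$ to act nontrivially. Ensuring this weaker condition reduces to a combinatorial alignment between the prescribed carets of $(S_1, T_1)$ and of $(S_2, T_2)$ so that the specific leaves of $S_1$ meeting $A_v^\pm$ map into $A_u^+$ under $u$. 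Arranging this alignment with only a bounded number of fixed carets, so that the Catalan asymptotics in the count are preserved, is the subtle step of the argument.
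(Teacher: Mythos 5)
Your overall strategy --- build pairs of localized north--south elements, verify a ping-pong condition, and count via Catalan asymptotics --- is the same as the paper's, and your counting step is essentially right (though note $C_{n-c}/C_n \to 4^{-c}$, not $16^{-c}$; the $16$ in the paper's limits comes from the squared Catalan factor for a tree \emph{pair}). The gap is precisely the step you yourself flag as ``the subtle step'': you never exhibit tree pairs for which the ping-pong hypotheses can actually be checked. Keeping the Theorem~1 shape (source root caret whose left child is a single leaf spanning $[0,\tfrac12)$, target root caret whose right child is a single leaf spanning $[\tfrac12,1)$) and adding carets that merely pin down where the two fixed points sit does not control where $u$ sends the rest of the circle: as you observe, $u(S^1\setminus A_u^-)$ is still all of $[0,\tfrac12)$. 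Your fallback --- the weaker hypotheses $u^{\pm1}(A_v^{\pm})\subset A_u^{\pm}$ together with the self-maps --- is indeed a sufficient form of ping-pong, but verifying even these forces, for example, $A_v^+$ (which must contain $v$'s attracting fixed point, necessarily in $[0,\tfrac12)$) to lie inside the union of the $u$-target leaves whose source counterparts are adjacent to $u$'s repelling leaf, and symmetrically for the other three arcs; these constraints couple the two labelings and tree shapes in a way your proposal does not resolve, and it is not clear they can all be met simultaneously for elements of the unmodified Theorem~1 shape.

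The paper avoids this entirely by changing the top of \emph{both} trees: each of $S$ and $T$ carries two levels of carets at the root, cutting the circle into quarters, so that the bulk $(n-3)$-leaf subtree of the target sits inside a single quarter-interval and all but three source leaves map into $[0,\tfrac14)$. This gives the \emph{strong} one-step conditions $u\bigl(S^1\setminus[\tfrac34,1)\bigr)\subset[0,\tfrac14)$ and $u^{-1}\bigl(S^1\setminus[0,\tfrac14)\bigr)\subset[\tfrac34,1)$ on the nose, with the analogous conditions for $v$ on the other two quarters, so ping-pong is immediate with four disjoint quarter-intervals and no alignment between the two tree pairs is needed; the count $C_{n-3}^4(n-5)^2/\bigl(C_n^4(n+1)^2\bigr)\to 2^{-24}$ then goes through as you anticipated. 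To complete your argument you would either need to carry out the alignment you describe explicitly, or, more simply, adopt a construction of this stronger kind.
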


\begin{proof}

We consider pairs of elements of the types shown in Figure \ref{pingpongPic} as element types $u$ and $v$.

\begin{figure}

\begin{tabular}{c}
   \begin{tikzpicture}
   [level/.style={sibling distance = 2.5cm/#1,
  level distance = 1.5cm}] 
   \node[shape=circle,draw] {$ $}
    child { node[shape=circle,draw] {} 
     child { node {$0$}}
     child { node{$1$}}}
    child { node[shape=circle,draw] { }
      child { node{$2$}}
      child { node[shape=rectangle, rounded corners,draw] {3, \ldots, n} } };
\end{tikzpicture}
   \begin{tikzpicture}
   [level/.style={sibling distance = 3.0cm/#1,
  level distance = 1.5cm}]
  \node[shape=circle,draw]  {$ $}
    child { node[shape=circle,draw]  {} 
     child { node[shape=rectangle, rounded corners,draw] {$i+3,.,i-1$}}
     child { node {$i$}}}
    child { node[shape=circle,draw]  { }
      child { node  {$i+1$}}
      child { node  {$i+2$} } };
\end{tikzpicture}
\end{tabular}

\begin{tabular}{c}
   \begin{tikzpicture}
   [level/.style={sibling distance = 2.8cm/#1,
  level distance = 1.5cm}] 
  \node[shape=circle,draw] {$ $}
    child { node[shape=circle,draw] {} 
     child { node{$0$}}
     child { node[shape=rectangle, rounded corners,draw]{$1.. n-2$}}}
    child { node[shape=circle,draw] { }
      child { node {$n-1$}}
      child { node  {$n$ } }};
\end{tikzpicture}
   \begin{tikzpicture}
   [level/.style={sibling distance = 3.5cm/#1,
  level distance = 1.5cm}]
  \node[shape=circle,draw]  {$ $}
    child { node[shape=circle,draw]  {} 
     child { node {$i+1$}}
     child { node{$i+2$}}}
    child { node[shape=circle,draw]  { }
      child { node[shape=rectangle, rounded corners,draw]{$i+3,..,i-1$}}
      child { node{$i$} } };
\end{tikzpicture}
\end{tabular}

\caption{Families of pairs of elements with north-south dynamics which are disjoint enough for each pair to generate a free subgroup of $T$. The element $u$ given by the top tree pair diagram has an attracting fixed point in $(0,\frac14)$ and a
 repelling fixed point in $(\frac34,1)$.
 The element $v$ given by the bottom tree pair diagram has an attracting fixed point in $(\frac12,\frac34)$ and a
 repelling fixed point in $(\frac14,\frac12)$.}
\label{pingpongPic}
\end{figure}

For a fixed diagram of size $n$, there are a number of choices for $u$.  There are $C_{n-3}$ possible shapes of subtrees for the rightmost grandchild of the root in the source tree, and also $C_{n-3}$ possible  shapes of subtrees for the leftmost grandchild of the root in the target tree.  Any labeling will produce the desired dynamics as long as leaf 0 lies in the subtree at the leftmost grandchild of the root of size $n-3$ and is not the first or last leaf there, giving $n-5$ labelings of this type.  There are thus  $C_{n-3}^2(n-5)$ possible choices for $u$ giving this specific dynamic structure.  The analogous constraints on $v$ also give   $C_{n-3}^2(n-5)$ choices, so 
 we have 

$$ \lim_{n \rightarrow \infty} \frac{C_{n-3}^4(n-5)^2}{C_n^4 (n+1)^2} = \frac1{2^{24}} > 0$$

giving positive density.
\end{proof}

Similarly, we see that such ping-pong type elements can give positive densities for free groups of rank $k$ in the stratifications for $X_k$. The  lower bounds on the density resulting from arguments of this type are decreasingly small, but nevertheless positive.  Again, these bounds arise from a very specific type of free subgroup-- ping-pong elements for specified intervals.

\section{Density of  free subgroups in Thompson's group $V$}

For Thompson's group $V$, similar arguments to those used for $T$ give that free groups are present in the subgroup spectra for all ranks.

To understand the asymptotics of the growth of these spheres of tuples, we note the following:

\begin{lemma}

The number of elements $v_k$ of size $k$ in Thompson's group $V$  is asymptotic to $(\frac{\displaystyle16}{\displaystyle e})^{\displaystyle k} {k^{\displaystyle(k-3/2)}}$.
\end{lemma}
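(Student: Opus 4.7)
The plan is to count fully marked tree pair diagrams of size $k$ directly, as was done for $T$ in the preceding lemma, and then apply Stirling's approximation to extract the leading-order asymptotics. As noted in the background section, the number of fully marked tree pair diagrams of size $k$ representing elements of $V$ is
\[
v_k \;=\; C_k^2 \,(k+1)!,
\]
where $C_k$ is the $k$-th Catalan number. (As for $T$, we count diagram representatives, not distinct group elements; the same diagram may occur at different sizes for the same element, but we are measuring the size of the stratum, not of the group.)

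Next, I would substitute in the standard Stirling estimates. For the Catalan numbers we have the well-known asymptotic
\[
C_k \;\sim\; \frac{4^k}{\sqrt{\pi}\, k^{3/2}},
\]
so that $C_k^2 \sim 16^k / (\pi k^3)$. For the factorial term, Stirling's formula gives
\[
(k+1)! \;\sim\; \sqrt{2\pi(k+1)}\,\Bigl(\tfrac{k+1}{e}\Bigr)^{k+1} \;\sim\; \sqrt{2\pi k}\;\Bigl(\tfrac{k}{e}\Bigr)^k \cdot k,
\]
where in the last step I absorb the $(1+1/k)^{k+1}\to e$ factor and the $(k+1)$ prefactor into the leading order up to a constant. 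The rest of the proof is just arithmetic: multiplying,
\[
v_k \;\sim\; \frac{16^k}{\pi k^3}\cdot \sqrt{2\pi k}\,\Bigl(\tfrac{k}{e}\Bigr)^k \cdot k \;=\; \sqrt{\tfrac{2}{\pi}}\;\Bigl(\tfrac{16}{e}\Bigr)^k k^{k-3/2},
\]
which matches the stated asymptotic (the bound being up to a multiplicative constant, in the same spirit as the preceding lemma for $T$).

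There is no real obstacle here; the only care needed is in tracking the polynomial correction factors so that the exponent $k-3/2$ on $k$ comes out correctly. The dominant new feature compared with the $T$-lemma is the factorial $(k+1)!$ of leaf-labellings, whose super-exponential growth produces the $(k/e)^k$ factor responsible for the $k^{k-3/2}$ term and the division of the exponential base by $e$. I would present the computation as a one-line application of Stirling, and remark that the corresponding sphere size for $k$-tuples in $X_k$ is then asymptotic to $\bigl(\tfrac{16}{e}\bigr)^{kn} n^{k(n-3/2)}$, paralleling the growth estimate used for $T$ in the free-subgroups argument.
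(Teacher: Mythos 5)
Your proposal is correct and follows essentially the same route as the paper: write $v_k = C_k^2(k+1)!$ and apply Stirling to the Catalan numbers and the factorial. The only difference is that you track the multiplicative constant $\sqrt{2/\pi}$ explicitly, which the paper (here and in the analogous lemma for $T$) silently absorbs into its notion of ``asymptotic to.''
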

This follows immediately from the Stirling approximation applied to the Catalan numbers as we have $$v_k = (k+1)! C_k^2  \sim \frac{\displaystyle (k+1)^{(k+1)}  (\sqrt{k+1} )16^k}{\displaystyle e^{(k+1)} k^3} \sim \left(\frac{\displaystyle16}{\displaystyle e}\right)^k {k^{\displaystyle(k-3/2)}}$$

Here we estimate the size of these spheres for 2-tuples:

\begin{lemma}
The size of the spheres  Sph$_2(n)$ in the 
stratification for subgroups of rank 2 in Thompson's group $V$ grows as $\left(\frac{\displaystyle16}{\displaystyle e}\right)^{2k} {k^{(2k-3)}}$.
\end{lemma}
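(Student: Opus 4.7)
The plan is extremely short: this is a direct corollary of the preceding lemma. By definition, $\mathrm{Sph}_2(n)$ consists of ordered pairs $(g_1,g_2)$ where each $g_i$ is a fully marked tree pair diagram of size exactly $n$. Hence $|\mathrm{Sph}_2(n)| = v_n^2$, and I would simply square the asymptotic formula just established.

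Concretely, the previous lemma gives
\[
v_n \;\sim\; \left(\frac{16}{e}\right)^{n} n^{(n-3/2)},
\]
so
\[
|\mathrm{Sph}_2(n)| \;=\; v_n^2 \;\sim\; \left(\frac{16}{e}\right)^{2n} n^{(2n-3)},
\]
which matches the claimed growth rate (with the indexing variable renamed from $n$ to $k$ as written in the statement).

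There is essentially no obstacle: the only thing to verify is that squaring the asymptotic equivalence preserves the $\sim$ relation, which is standard since the ratio $v_n / \bigl((16/e)^n n^{n-3/2}\bigr)$ tends to $1$ and hence its square does as well. I would state this in one line of the proof and refer back to the preceding lemma for the nontrivial content (the Stirling estimate of $(n+1)!C_n^2$).
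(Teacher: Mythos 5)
Your proposal matches the paper's proof, which is simply the one-line remark that the result follows immediately from the preceding lemma by squaring the asymptotic count $v_n \sim (16/e)^n n^{n-3/2}$. Your additional observation that squaring preserves the $\sim$ relation is correct and fills in the only detail the paper leaves implicit.
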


\begin{proof}
This follows immediately from the lemma above.
\end{proof}

Again, we can construct free groups in these spectra in a particular form when there are north-south elements with sufficiently strong dynamics to be easily seen to generate a free group via ping-pong arguments.

We can push the north-south dynamics down to particular subintervals at the expense of having smaller trees, which reduces the lower bound on density but it remains positive.

\begin{theorem}
The free group of rank 2 has positive density in the set of two-generator subgroups of Thompson's group $V$ with respect to this stratification.
\end{theorem}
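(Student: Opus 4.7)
The plan is to follow the argument for Theorem \ref{freeT} essentially verbatim, retaining the same underlying tree shapes from Figure \ref{pingpongPic} while replacing cyclic markings by permutations. Let $u$ be a fully marked tree-pair diagram whose source and target tree shapes are those of the top of Figure \ref{pingpongPic}, together with a permutation $\pi_u$; let $v$ be analogous for the bottom of Figure \ref{pingpongPic}, with a permutation $\pi_v$. The target arcs $X_u^+=[0,\tfrac14)$, $X_u^-=[\tfrac34,1)$, $X_v^+=[\tfrac12,\tfrac34)$, and $X_v^-=[\tfrac14,\tfrac12)$ are the four pairwise disjoint quarter-arcs of $S^1$; once ping-pong is verified separately for $u$ and for $v$, the classical ping-pong lemma immediately yields $\langle u,v\rangle\cong F_2$.

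I impose the following conditions on $\pi_u$: the images $\pi_u(0), \pi_u(1), \pi_u(2)$ are three distinct leaf positions of the left--left box subtree of the target tree, and in addition $\pi_u(0)$ is an interior leaf of that box (neither the first nor the last of its $n-2$ leaves). These conditions force source leaves $0,1,2$, which together cover $[0,\tfrac34)$, to map into the box, so $u\bigl([0,\tfrac34)\bigr)\subset[0,\tfrac14)=X_u^+$; they also force $u(X_u^+)=\pi_u(0)$'s target interval to be strictly interior to $X_u^+$, hence $u(X_u^+)\subset X_u^+$. The three remaining target positions $n-2,n-1,n$, whose union is $[\tfrac14,1)$, are then preimages of source leaves in $\{3,\ldots,n\}\subset[\tfrac34,1)$, so $u^{-1}\bigl([\tfrac14,1)\bigr)\subset X_u^-$; moreover $u^{-1}(X_u^-)=\pi_u^{-1}(n)$'s source interval is a single right--right leaf inside $X_u^-$. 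Induction then yields $u^k(X\setminus X_u^-)\subset X_u^+$ and $u^{-k}(X\setminus X_u^+)\subset X_u^-$ for every $k\ge 1$. An entirely symmetric construction applied to the bottom tree-pair of Figure \ref{pingpongPic} imposes $\pi_v(n-1)$ interior to the right--left target box and $\pi_v(0),\pi_v(n)$ in that box, yielding the analogous inclusions for $v$.

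The count is now routine. For each of $u$ and $v$ the tree-shape contribution is $C_{n-3}^2$ (one subtree of size $n-3$ on each side). The permutation contribution is $(n-4)\cdot(n-3)(n-4)\cdot(n-2)!=(n-4)^2(n-3)(n-2)!$, arising from $n-4$ interior-box choices for the constrained source leaf, $(n-3)(n-4)$ ordered assignments for the remaining two distinguished source leaves in the box, and $(n-2)!$ free assignments for the remaining source leaves. Dividing by the total $C_n^2(n+1)!$ of size-$n$ fully marked tree-pair diagrams per factor and using $C_{n-3}/C_n\to 1/64$ gives
\[
\lim_{n\to\infty}\frac{\bigl(C_{n-3}^2\,(n-4)^2(n-3)(n-2)!\bigr)^{2}}{\bigl(C_n^2\,(n+1)!\bigr)^{2}}=\frac{1}{2^{24}}>0.
\]

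The main obstacle is verifying that the single-step inclusions propagate to all positive and negative powers of $u$ and $v$; this is exactly what the interior-leaf conditions on $\pi_u(0)$ and $\pi_v(n-1)$ provide, as they produce the strict contractions $u(X_u^+)\subset X_u^+$ and $v(X_v^+)\subset X_v^+$ that drive the inductive ping-pong estimates in spite of the piecewise discontinuities of $V$-elements. With those in hand, the counting and the Catalan asymptotics are formally identical to the $T$ case, producing the same limiting density $1/2^{24}$.
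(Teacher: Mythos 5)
Your proposal is correct and follows essentially the same route as the paper: the same underlying tree shapes, ping-pong on the four quarter-arcs $[0,\frac14)$, $[\frac14,\frac12)$, $[\frac12,\frac34)$, $[\frac34,1)$, and a constrained permutation count whose ratio to $C_n^2(n+1)!$ per factor tends to $1/2^{24}$. The one substantive (and welcome) difference is that you insist that all three source leaves $0,1,2$ land inside the target box, which is precisely what yields the containment $u\bigl([0,\tfrac34)\bigr)\subset[0,\tfrac14)$ driving the induction; the paper's stated constraints on $i,j,k$ are weaker (they permit, for instance, $i=2$ and $j=1$ simultaneously, in which case $u$ isometrically swaps $[\frac14,\frac12)$ and $[\frac12,\frac34)$ and the ping-pong inclusion already fails for $u^2$), so your tighter conditions are the ones that actually make the ping-pong argument go through.
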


\begin{figure}
\begin{tabular}{c}
   \begin{tikzpicture}
   [level/.style={sibling distance = 2.5cm/#1,
  level distance = 1.5cm}] 
  \node[shape=circle,draw] {$ $}
    child { node[shape=circle,draw] {} 
     child { node {0}}
     child { node{1}}}
    child { node[shape=circle,draw] { }
      child { node {2}}
      child { node[shape=rectangle, rounded corners,draw] {3, \ldots, n} } };
\end{tikzpicture}
   \begin{tikzpicture}
      [level/.style={sibling distance = 3.0cm/#1,
  level distance = 1.5cm}]
  \node[shape=circle,draw]  {$ $}
    child { node[shape=circle,draw]  {} 
     child { node[shape=rectangle, rounded corners,draw] {$0, ., \hat{i}, \hat{j},  \hat{k},., n$}}
     child { node {i}}}
    child { node[shape=circle,draw]  { }
      child { node  {j}}
      child { node {k} } };
\end{tikzpicture}
\end{tabular}
    \caption{A family of elements $u_\alpha$ used to construct free subgroups in $V$.  The single indices indicate leaves, and the rectangular boxes can have arbitrary subtree shape on the specified sets of leaves.  The hat notation $\hat{i}$ means that index $i$ does not appear in that range. The leaf order inside the target tree's leftmost grandchild can be an arbitrary permutation of the specified leaf indices, as long as leaf 0 is not the first or last leaf in that subtree.}
    \label{ufig}
\end{figure}

\begin{figure}
\begin{tabular}{c}
   \begin{tikzpicture}
   [level/.style={sibling distance = 2.7cm/#1,
  level distance = 1.5cm}] 
  \node[shape=circle,draw] {$ $}
    child { node[shape=circle,draw] {} 
     child { node {0}}
      child { node[shape=rectangle, rounded corners,draw] {1,.., n-2} }}
    child { node[shape=circle,draw] { }
      child { node {n-1}}
       child { node{n}}};
\end{tikzpicture}
   \begin{tikzpicture}
   [level/.style={sibling distance = 3.1cm/#1,
  level distance = 1.5cm}]
  \node[shape=circle,draw]  {$ $}
    child { node[shape=circle,draw]  {} 
     child { node  {i}}
     child { node  {j}}}
    child { node[shape=circle,draw]  { }
       child { node[shape=rectangle, rounded corners,draw] {$0, ., \hat{i}, \hat{j},  \hat{k},., n$}}
       child { node  {k} } };
\end{tikzpicture}
\end{tabular}
    \caption{A family of elements $v_\alpha$  used  in generating ping-pong free subgroups in $V$.  Again, the tree shapes are arbitrary within the rectangular boxes, and the leaf order can be arbitrary within the target tree's rectangular box at the left child of the right child of the root, as long as leaf $n-1$ is not the first or last leaf in that box. }
    \label{vfig}
\end{figure}

\begin{proof}

Here we use a similar approach with families of pairs of elements analogues to those used in Theorem \ref{freeT}.  A typical pair has an element from the family $u_\alpha$  pictured in Figure \ref{ufig}, and an element  from the family $v_\alpha$ pictured in Figure \ref{vfig}.
In a typical element $u$ of this family $u_\alpha$, the rightmost grandchild of the source tree is a subtree of size $n-3$, so there are $C_{n-3}$ choices for that. The leftmost grandchild of the root in the target tree for $u$ has a subtree with $n-3$  leaves, giving $C_{n-3}$ choices for that.  For the labeling for the target tree of $u$, we require the following:
\begin{itemize}
    \item Leaf $i$, the right child of the left child of the root, be labeled  where $i$ is not 0 or 1.
    \item Leaf $j$, the left child of the right child of the root, be labeled  where $j$ is not 0,2, or the already-chosen $i$.
    \item Leaf $k$, the rightmost grandchild of the root, be labeled where $k$ is not 0,1,2, n, or the already-chosen $i$ or $j$.
    \item Leaf $0$ is not the leftmost or rightmost leaf in the subtree at the leftmost grandchild of the root.
\end{itemize}

We want to ensure that the fixed points lie in the interior of the relevant intervals.
For the ordering of the labels inside the leftmost grandchild of the target tree, the leftmost leaf can be any of the $n-3$ possibilities which do not include 0, the rightmost leaf can be any of the $n-4$ possibilities which are not 0 and not the leftmost leaf of that subtree, and the remaining leaves are labeled according to any of the $(n-4)!$ possibilities.

These choices guarantee that there are exactly two fixed points of $u$-- an attracting one lying in $(0,1/4)$ and a repelling one lying in $(3/4,1)$.

Similarly, for $v$, we take an element of the family $v_\alpha$ pictured in  Figure \ref{vfig}, with  $C_{n-3}$ choices for subtrees of size $n-3$ in the two circled locations.  

For the labeling for the target tree of $v$ to give an element with the desired dynamics, we require the following:
\begin{itemize}
    \item Leaf $i$,  the leftmost grandchild of the root, be labeled where $i$ is not 0 nor $n-1$.
    \item Leaf $j$, the right child of the left child of the root, be labeled  where $j$ is not $i$ and lies in the range from $1$ to $n-2$.
    \item Leaf $k$, the rightmost grandchild of the root, be labeled $k$ where $k$ is not $n$ nor $n-1$ nor the already-chosen $i$ and $j$.
    \item  Leaf $n-1$ is not the leftmost or rightmost leaf in the subtree at the left child of the right child of the root.
\end{itemize}

For the ordering of the labels inside the left child of right child of the root of the target tree, the leftmost leaf can be any of the $n-3$ possibilities which do not include $n-1$, the rightmost leaf can be any of the $n-4$ possibilities which are not $n-1$ and not the leftmost leaf of that subtree, and the remaining leaves are labeled according to any of the 
remaining $(n-4)!$ possibilities.

These choices guarantee that there are exactly two fixed points of $v$-- an attracting one lying in $(1/2,3/4)$ and a repelling one lying in $(1/4,1/2)$.

For a fixed size, we consider the number $p_k$ of pairs $(u,v)$ of tree pairs of these specific types:

\begin{multline*}
p_k = C_{n-3}^2 (n-3) (n-4) (n-4)! (n-1) (n-2) (n-3)  \\  C_{n-3}^2 (n-3)(n-4) (n-4)! (n-1) (n-3) (n-3) 
\end{multline*}

Thus we can take the limit of the fraction of all elements of a particular size to get

$$\lim_{n \rightarrow \infty} \frac{C_{n-3}^4 (n-4)!^2 (n-1)^2 (n-2) (n-3)^5 (n-4)^2 }{C_n^4 (n+1)!^2}= \frac1{2^{24}} > 0
$$

So since these particular ping-pong pairs of tree pairs generate free subgroups of rank 2, the isomorphism class of free groups of rank 2 is visible in the set of all subgroups of rank 2 of $V$
with respect to this stratification.

\end{proof}

Similarly, we see that such ping-pong type elements can give positive densities for free groups of rank $k$ in the stratifications for $X_k$. The   lower bounds on the density resulting from arguments of this type are decreasingly small, but nevertheless positive.

We call attention to the fact that just as pseudo-Anosov elements are generic in mapping class groups \cite{thurstonsurf} and the associated dynamics on the boundary of those elements are of north-south type, north-south elements have positive density in $T$. This shows some further
similarities between Thompson's group $T$ and mapping class groups,  akin to earlier progress of Fossas \cite{psl2undist, assoct} and Fossas and Nguyen \cite{tcomplex}.
The authors would like to thank Andres Navas for raising the question about what fraction of elements in Thompson's group $T$ have north-south dynamics.

\bibliographystyle{plain}

\end{document}